\crefname{thm}{Theorem}{Theorems}
\crefname{pro}{Proposition}{Propositions}
\crefname{lem}{Lemma}{Lemmas}
\crefname{rmk}{Remark}{Remarks}
\crefname{cor}{Corollary}{Corollaries}
\crefname{dfn}{Definition}{Definitions}
\crefname{ex}{Example}{Examples}
\crefname{section}{Section}{Sections}
\crefname{subsection}{Subsection}{Subsections}
\newcommand{\To}{\rightarrow}
\newcommand{\as}{{\rm d}\mathbb{P}\times{\rm d} t-a.e.}
\newcommand{\ps}{\mathbb{P}-a.s.}
\newcommand{\F}{\mathcal{F}}
\newcommand{\E}{\mathbb{E}}
\newcommand{\s}{\mathcal{S}}
\newcommand{\lcal}{\mathcal{L}}
\newcommand{\mcal}{\mathcal{M}}
\newcommand{\T}{[0,T]}
\newcommand{\R}{{\mathbb R}}
\newcommand {\Dis}{\displaystyle}
\newtheorem{thm}{Theorem}[section]
\newtheorem{lem}[thm]{Lemma}
\newtheorem{pro}[thm]{Proposition}
\newtheorem{rmk}[thm]{Remark}
\begin{document}



\section{Introduction}
\label{sec:1-Introduction}
\setcounter{equation}{0}

Let us fix a positive integer $d$ and a positive real number $T>0$. Let $(B_t)_{t\in\T}$ be a $d$-dimensional standard Brownian motion defined on some complete probability space $(\Omega, \F, \mathbb{P})$, and $(\F_t)_{t\in\T}$ its natural filtration augmented by all $\mathbb{P}$-null sets of $\F$. For any two elements $x,y$ in $\R^d$, denote by $x\cdot y$ their scalar inner product. We recall that a real-valued and $(\F_t)$-progressively measurable process $(X_t)_{t\in\T}$ belongs to class (D) if the family of random variables $\{X_\tau: \tau\in \Sigma_T\}$ is uniformly integrable, where and hereafter $\Sigma_T$ denotes the set of all $(\F_t)$-stopping times $\tau$ valued in $\T$.

For any real number $p\geq 1$, let $L^p$ represent the set of (equivalent classes of) all real-valued and $\F_T$-measurable random variables $\xi$ such that $\E[|\xi|^p]<+\infty$, $\lcal^p$ the set of (equivalent classes of) all real-valued and $(\F_t)$-progressively measurable processes $(X_t)_{t\in\T}$ such that\vspace{-0.1cm}
$$
\|X\|_{\lcal^p}:=\left\{\E\left[\left(\int_0^T |X_t|{\rm d}t\right)^p\right]\right\}^{1/p}<+\infty,
$$
$\s^p$ the set of (equivalent classes of) all real-valued, $(\F_t)$-progressively measurable and continuous processes $(Y_t)_{t\in\T}$ such that
$$\|Y\|_{{\s}^p}:=\left(\E[\sup_{t\in\T} |Y_t|^p]\right)^{1/p}<+\infty,\vspace{0.2cm}$$
and $\mcal^p$ the set of (equivalent classes of) all $\R^d$-valued and $(\F_t)$-progressively measurable processes $(Z_t)_{t\in\T}$ such that
$$
\|Z\|_{\mcal^p}:=\left\{\E\left[\left(\int_0^T |Z_t|^2{\rm d}t\right)^{p/2}\right] \right\}^{1/p}<+\infty.\vspace{0.1cm}
$$

We study the following backward stochastic differential equation (BSDE for short):
\begin{equation}\label{eq:1}
  Y_t=\xi+\int_t^T g(s,Y_s,Z_s){\rm d}s-\int_t^T Z_s \cdot {\rm d}B_s, \ \ t\in\T,
\end{equation}
where $\xi$ is a real-valued and $\F_T$-measurable random variable called the terminal condition or terminal value, the function (here called the generator)
$g(\omega, t, y, z):\Omega\times\T\times\R\times\R^d \mapsto \R $
is $(\F_t)$-progressively measurable for each $(y,z)$ and continuous in $(y,z)$, and the pair of processes $(Y_t,Z_t)_{t\in\T}$ with values in $\R\times\R^d$ is called the solution of \eqref{eq:1}, which is $(\F_t)$-progressively measurable such that $\ps$, $t\mapsto Y_t$ is continuous, $t\mapsto Z_t$ belongs to $L^2(0,T)$, $t\mapsto g(t,Y_t,Z_t)$ is integrable, and verifies \eqref{eq:1}. By BSDE$(\xi,g)$, we mean the BSDE with terminal value $\xi$ and generator $g$.

The following two assumptions with respect to the generator $g$ will be used in this note. The first one is called the linear growth condition, and the second one is called the uniformly Lipschitz condition, which is obviously stronger than the linear growth condition.
\begin{enumerate}
\renewcommand{\theenumi}{(H\arabic{enumi})}
\renewcommand{\labelenumi}{\theenumi}
\item\label{H1} There exist two positive constants $\beta$ and $\gamma$ such that $\as$, for each $(y,z)\in \R\times\R^d$,
    $$
    |g(\omega,t,y,z)|\leq |g(\omega,t,0,0)|+\beta|y|+\gamma |z|;
    $$
\item\label{H2} There exist two positive constants $\beta$ and $\gamma$ such that $\as$, for all $(y^i,z^i)\in \R\times\R^d$, $i=1,2$,
    $$
    |g(\omega,t,y^1,z^1)-g(\omega,t,y^2,z^2)|\leq \beta|y^1-y^2|+\gamma |z^1-z^2|.
    $$
\end{enumerate}
Denote $g_0:=g(\cdot,0,0)$. It is well known that for $(\xi,g_0)\in L^p\times\lcal^p$ with some $p>1$, BSDE$(\xi,g)$ admits a minimal (maximal) solution $(Y_\cdot,Z_\cdot)$ in $\s^p\times\mcal^p$ if the generator $g$ satisfies assumption \ref{H1}, and the solution is unique in $\s^p\times\mcal^p$ if $g$ further satisfies assumption \ref{H2}. See e.g. \cite{PardouxPeng1990SCL,ElKarouiPengQuenez1997MF,LepeltierSanMartin1997SPL,
BriandDelyonHu2003SPA,FanJiang2012JAMC} for more details. However, for $(\xi,g_0)\in L^1\times\lcal^1$, one needs to restrict the generator $g$ to grow sub-linearly with respect to $z$, i.e., with some $q\in [0,1)$,
$$
|g(\omega,t,y,z)|\leq |g_0(\omega,t)|+\beta|y|+\gamma |z|^q,\ \ (\omega,t,y,z)\in \Omega\times\T\times\R\times\R^d,
$$
for BSDE$(\xi,g)$ to have a minimal (maximal) adapted solution and a unique solution  when the generator $g$ satisfies  \ref{H1} and \ref{H2} respectively. See for example \cite{BriandDelyonHu2003SPA,BriandHu2006PTRF,Fan2016SPA} for more details.

Recently, by applying the dual representation of solution to BSDE with convex generator, see for instance \cite{ElKarouiPengQuenez1997MF,Tang2006CRA,
DelbaenHuRichou2011AIHPPS}, to establish some a priori estimate and the localization procedure, the authors in \cite{HuTang2018ECP} proved the existence of a solution to BSDE$(\xi,g)$ when the generator $g$ satisfies \ref{H1} and the terminal value $(\xi,g_0)$ is $L\exp\left(\mu\sqrt{2\log(1+L)}\right)$-integrable for a positive parameter $\mu>\mu_0$ with a critical value $\mu_0=\gamma \sqrt{T}$, and showed by a counterexample that the conventionally expected $L\log L$ integrability and even the preceding integrability for a positive parameter $\mu<\mu_0$ is not enough for the existence of a solution to a BSDE with the generator $g$ satisfying \ref{H1}. Furthermore, by establishing some interesting properties of the function $\psi(x,\mu)=x\exp\left(\mu\sqrt{2\log(1+x)}\right)$ and observing the nice property of the obtained solution $Y$ that $\psi(|Y|,a)$ belongs to class (D) for some $a>0$, the authors in \cite{BuckdahnHuTang2018ECP} divided the whole interval $\T$ into a finite number of subintervals and proved the uniqueness of the solution to the preceding BSDE$(\xi,g)$ with the generator $g$ satisfying \ref{H2} and $\mu>\mu_0$.

In this note, we prove that the existence and uniqueness result obtained respectively in \cite{HuTang2018ECP} and \cite{BuckdahnHuTang2018ECP} is still true under the critical value case: $\mu=\gamma\sqrt{T}$, see \cref{thm:MainResult} in next section.

For the existence, in order to apply the localization procedure put forward initially in \cite{BriandHu2006PTRF}, the key is always to establish some uniform a priori estimate for the first process $Y^n_\cdot$ in the solution of the approximated BSDEs. For this, instead of applying the dual representation of solution to BSDE with convex generator, our whole idea consists in searching for an appropriate function $\phi(s, x; t)$ in order to apply It\^{o}-Tanaka's formula to $\phi(s,|Y^n_s|;t)$ on the time interval $s\in [t,\tau_n]$ with $(\F_t)$-stopping time $\tau_n$ valued in $[t,T]$. More specifically, we need to find a positive, continuous, strictly increasing and strictly convex function $\phi(s,x;t):[t,T]\times[0,+\infty)\mapsto (0,+\infty)$ with $t\in (0,T]$ satisfying
\begin{equation}\label{eq:2}
-\gamma \phi_x(s,x;t)|z| +{1\over 2}\phi_{xx}(s,x;t)|z|^2+\phi_s(s,x;t)\geq 0,\ \ (s,x,z)\in [t,T]\times [0,+\infty)\times\R^d,
\end{equation}
where and hereafter, for each $t\in (0,T]$, $\phi_s(\cdot,\cdot; t)$ denotes the first-order partial derivative of $\phi(\cdot,\cdot; t)$ with respect to the first variable, and $\phi_x(\cdot,\cdot; t)$ and $\phi_{xx}(\cdot,\cdot; t)$ respectively the first-order and second order partial derivative of $\phi(\cdot,\cdot; t)$ with respect to the second variable. Observe from the basic inequality $2ab\leq a^2+b^2$ that
$$
\begin{array}{lll}
\Dis -\gamma \phi_x(s,x;t)|z|+{1\over 2}\phi_{xx}(s,x;t)|z|^2&=& \Dis \phi_{xx}(s,x;t)\left( -{\gamma\phi_x(s,x;t)\over \phi_{xx}(s,x;t)}|z|+{1\over 2}|z|^2\right)\vspace{0.2cm}\\
&\geq & \Dis -{\gamma^2\over 2}{\phi^2_x(s,x;t)\over \phi_{xx}(s,x;t)}.
\end{array}
$$
Hence, it suffices if for each $t\in (0,T]$, the function $\phi(\cdot,\cdot;t)$ satisfies the following condition:\vspace{0.1cm}
\begin{equation}\label{eq:3}
-{\gamma^2\over 2}{\phi^2_x(s,x;t)\over \phi_{xx}(s,x;t)}+\phi_s(s,x;t)\geq 0,\ \ (s,x)\in [t,T]\times [0,+\infty).\vspace{0.1cm}
\end{equation}
Inspired by the investigation in \cite{HuTang2018ECP} and \cite{BuckdahnHuTang2018ECP}, we can choose the following function, for each $t\in (0,T]$,\vspace{-0.1cm}
\begin{equation}\label{eq:4}
\phi(s,x;t):=(x+e)\exp\left(\mu_s\sqrt{2\log(x+e)}+\int_t^s k_r{\rm d}r\right), \ (s,x)\in [t,T]\times [0,+\infty)
\end{equation}
to explicitly solve the inequality \eqref{eq:3}. We find that \eqref{eq:3} is satisfied for $\phi(s,x;t)$ when
\begin{equation}\label{eq:5}
\mu_s=\gamma\sqrt{s}\ \  {\rm and}\ \ k_r={\gamma\over 2}\left(\gamma+\sqrt{2\over r}\right).\vspace{-0.1cm}
\end{equation}

For the uniqueness of the solution to BSDE$(\xi,g)$, by virtue of two useful inequalities obtained in \cite{HuTang2018ECP}, we use a similar idea to that in \cite{BuckdahnHuTang2018ECP} to divide the whole
interval $\T$ into some sufficiently small subintervals and show successively the uniqueness of the solution in these subintervals. However, different from  \cite{BuckdahnHuTang2018ECP}, in our case the number of these subintervals, which are $[3T/4, T]$, $[3^2T/4^2, 3T/4]$, $[3^3T/4^3, 3^2T/4^2]$, $\cdots$, $[3^nT/4^n, 3^{n-1}T/4^{n-1}]$, $\cdots$, is infinite. Fortunately, observing that the left end points of these subintervals tend to 0 as $n\To \infty$ and in view of the continuity of the first process in the solution with respect to the time variable, we can obtain the uniqueness of the solution on the whole interval $\T$ by taking the limit.

\section{Existence and uniqueness}
\label{sec:2-Main results}

Define the function $\psi$:
\begin{equation}\label{eq:6}
\psi(x,\mu)=x\exp\left(\mu\sqrt{2\log(1+x)}\right),\ \ (x,\mu)\in [0,+\infty)\times [0,+\infty),
\end{equation}
which is introduced in \cite{HuTang2018ECP} and \cite{BuckdahnHuTang2018ECP}.\vspace{0.1cm}

The following existence and uniqueness theorem is the main result of this note.\vspace{0.1cm}

\begin{thm}\label{thm:MainResult}
Let $\xi$ be a terminal condition and $g$ be a generator which is continuous in $(y,z)$. If $g$ satisfies assumption \ref{H1} with parameters $\beta$ and $\gamma$, and\vspace{-0.1cm}
$$\psi\left(|\xi|+\int_0^T|g(t,0,0)|{\rm d}t,\ \gamma\sqrt{T}\right)\in L^1,\vspace{-0.1cm}$$
then BSDE$(\xi,g)$ admits a solution $(Y_t,Z_t)_{t\in\T}$ such that $\left(\psi\left(|Y_t|,\gamma\sqrt{t}\right)\right)_{t\in\T}$ belongs to class (D), and $\ps$, for each $t\in \T$,
\begin{equation}\label{eq:7}
|Y_t|\leq \psi(|Y_t|,\gamma\sqrt{t})\leq C\E\left[\left.\psi\left(|\xi|+\int_0^T|g(t,0,0)|{\rm d}t,\ \gamma\sqrt{T}\right)\right|\F_t\right]+C,
\end{equation}
where $C$ is a positive constant depending only on $(\beta,\gamma,T)$.

Furthermore, if $g$ satisfies assumption \ref{H2}, then BSDE$(\xi,g)$ admits a unique solution $(Y_t,Z_t)_{t\in\T}$ such that $\left(\psi\left(|Y_t|,\gamma\sqrt{t}\right)\right)_{t\in\T}$ belongs to class (D).\vspace{0.2cm}
\end{thm}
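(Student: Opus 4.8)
The plan for existence is to set up the standard approximation-and-localization machinery and then control the approximating solutions using the explicit convex function $\phi(s,x;t)$ prescribed in \eqref{eq:4}--\eqref{eq:5}. Concretely, I would first truncate the terminal value and the generator (e.g.\ replace $\xi$ by $\xi_n := (\xi \wedge n) \vee (-n)$ and $g_0$ by a bounded approximation, and smooth out the growth in $z$ if needed) so that each approximating BSDE$(\xi_n, g_n)$ falls into the well-posed $L^p$ regime recalled after assumption \ref{H2}, yielding solutions $(Y^n_\cdot, Z^n_\cdot)$ in $\s^p \times \mcal^p$. By a comparison argument, $|Y^n_\cdot|$ is dominated by the solution of a BSDE whose generator is the linear majorant $|g_0| + \beta|y| + \gamma|z|$, so it suffices to estimate that dominating process.

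The heart of the existence proof is the uniform a priori estimate \eqref{eq:7}. Here I would fix $t \in (0,T]$, apply It\^o--Tanaka's formula to $\phi(s,|Y^n_s|;t)$ over $s \in [t,\tau_k]$ for a localizing sequence of stopping times $\tau_k \uparrow T$, and use that, by construction, the drift term satisfies inequality \eqref{eq:2}. This makes $\phi(s,|Y^n_s|;t)$ a local submartingale after absorbing the generator's contribution via \ref{H1}, so that taking conditional expectations and sending $k \to \infty$ (with the monotone/dominated convergence justified by the integrability of $\psi(|\xi|+\int_0^T|g_0|\,{\rm d}s,\gamma\sqrt T)$) gives $\phi(t,|Y^n_t|;t) \le \E[\phi(T,|\xi_n|+\int_0^T|g_0|,\ldots;t)\mid \F_t]$. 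Since $\phi(t,x;t) = (x+e)\exp(\gamma\sqrt t\,\sqrt{2\log(x+e)})$ and $\int_t^T k_r\,{\rm d}r$ is bounded uniformly in $t$ by a constant depending only on $(\gamma,T)$, this translates, after comparing $(x+e)\sqrt{2\log(x+e)}$ with $x\sqrt{2\log(1+x)}$, into exactly the bound \eqref{eq:7} with a constant $C=C(\beta,\gamma,T)$. I expect this a priori estimate, and in particular the careful verification that the critical choice $\mu_s = \gamma\sqrt s$ still makes the localization and limiting arguments go through at the boundary $t=0$, to be the main obstacle, since this is precisely where the subcritical techniques break down. The uniform bound then yields that $(\psi(|Y^n_t|,\gamma\sqrt t))$ is bounded in class (D), and a standard stability/extraction argument (passing to the limit in the approximating BSDEs, using the monotone structure and the $\mcal^p$-type estimates on $Z^n$) produces a solution $(Y,Z)$ satisfying \eqref{eq:7} with $(\psi(|Y_t|,\gamma\sqrt t))$ in class (D).

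For uniqueness under \ref{H2}, I would follow the interval-splitting strategy of \cite{BuckdahnHuTang2018ECP} adapted to the dyadic-type partition $[3^nT/4^n, 3^{n-1}T/4^{n-1}]$ announced in the introduction. Given two solutions $(Y^1,Z^1)$ and $(Y^2,Z^2)$ with $\psi(|Y^i_t|,\gamma\sqrt t)$ in class (D), I would set $\hat Y := Y^1 - Y^2$, $\hat Z := Z^1 - Z^2$, and work on each subinterval $[a,b]$ (with $b$ small, or proceeding inductively backward from $T$) where the Lipschitz constants and the length $b-a$ are small enough that the two useful inequalities from \cite{HuTang2018ECP} force $\hat Y \equiv 0$ on $[a,b]$ given $\hat Y_b = 0$. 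The essential new point is that the subintervals are now infinitely many, but since their left endpoints $3^nT/4^n \to 0$ and $t \mapsto Y^i_t$ is continuous, uniqueness on each $[3^nT/4^n, 3^{n-1}T/4^{n-1}]$ propagates inductively and, by continuity, extends to $t=0$ in the limit, giving $\hat Y \equiv 0$ on all of $\T$ and hence $\hat Z \equiv 0$ as well. The delicate step here will be ensuring that the small-interval contraction estimate has a modulus that is summable/controllable over the infinitely many pieces, which is exactly why the geometric ratio $3/4$ and the class (D) property of $\psi(|Y^i_\cdot|,\gamma\sqrt\cdot)$ are needed.
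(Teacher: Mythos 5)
Your proposal follows essentially the same route as the paper: existence via truncation into the $L^2$ regime, a uniform a priori bound obtained by applying It\^{o}--Tanaka's formula to the critical test function $\varphi(s,x;t)$ of \eqref{eq:4}--\eqref{eq:5}, and the localization of \cite{BriandHu2006PTRF}; uniqueness via linearization, the two lemmas from \cite{HuTang2018ECP}, the geometric partition $[3^nT/4^n,3^{n-1}T/4^{n-1}]$, and continuity at $t=0$. However, two steps of your existence sketch would fail as written, and both are filled by specific devices in the paper's proof.

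First, your truncation $\xi_n:=(\xi\wedge n)\vee(-n)$ is not monotone in $n$, so the comparison theorem yields no monotone structure, and the localization procedure of \cite{BriandHu2006PTRF} --- which you later invoke through ``the monotone structure'' --- has nothing to work with. The paper truncates the positive and negative parts separately, $\xi^{n,p}:=\xi^+\wedge n-\xi^-\wedge p$ (and likewise for $g(t,0,0)$), so that the minimal solutions $Y^{n,p}$ are nondecreasing in $n$ and nonincreasing in $p$, and the solution is $Y=\inf_p\sup_n Y^{n,p}$. Second, applying It\^{o}--Tanaka to $\phi(s,|Y^n_s|;t)$ directly does \emph{not} produce a local submartingale: inequality \eqref{eq:2} controls only the term $-\gamma\phi_x|z|$, while the drift also contains the uncontrolled negative contribution $-\phi_x\left(\beta|Y^n_s|+|g(s,0,0)|\right)$. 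The ``absorption'' you allude to is precisely the transformation in the proof of Proposition~\ref{pro:2.5}: one sets $\bar Y_s:=e^{\beta s}|Y_s|+\int_0^s e^{\beta r}|g(r,0,0)|\,{\rm d}r$ and $\bar Z_s:=e^{\beta s}{\rm sgn}(Y_s)Z_s$, so the exponential weight eats $\beta|y|$, the additive integral eats $|g_0|$, the local time enters with a favorable sign, and only then does \eqref{eq:2} (with $z=e^{\beta s}Z_s$) give ${\rm d}\varphi(s,\bar Y_s;t)\geq \varphi_x(s,\bar Y_s;t)\bar Z_s\cdot{\rm d}B_s$; Proposition~\ref{pro:2.4} and Lemma~\ref{Lem:2.2} then convert the bound on $\psi(\bar Y_t,\gamma\sqrt{t})$ into \eqref{eq:12}. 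Note also that the estimate at $t=0$, which you flag as the main obstacle, is not obtained inside the It\^{o} argument at all (indeed $\varphi(\cdot,\cdot;0)$ is not $\mathcal{C}^{1,2}$); it follows from the case $t\in(0,T]$ by continuity of $Y$ and of the conditional-expectation martingale on the right-hand side. Your uniqueness paragraph matches the paper's proof; the only correction is that no ``summable modulus'' over the infinitely many intervals is needed: on each interval $[3b/4,b]$ one gets $\delta Y\equiv 0$ exactly, because $2(b-t)/t\leq 2/3<1$ holds uniformly and makes Lemma~\ref{lem:2.9} applicable with the same constant $\sqrt{3}$ on every piece, the only limiting step being the continuity argument at $t=0$.
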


In order to prove the above theorem, we need the following lemmas and propositions. First, the following lemma have been proved, see Proposition 2.3 and the proof of Theorem 2.5 in \cite{BuckdahnHuTang2018ECP}.\vspace{0.1cm}

\begin{lem}\label{Lem:2.2}
We have the following assertions on $\psi$:
\begin{itemize}
\item [(i)] For each $x\geq 0$, $\psi(x,\cdot)$ is nondecreasing on $[0,+\infty)$.
\item [(ii)] For $\mu\geq 0$, $\psi(\cdot,\mu)$ is a positive, strictly increasing and strictly convex function on $[0,+\infty)$.
\item [(iii)] For $c\geq 1$, we have $\psi(cx,\mu)\leq \psi(c,\mu)\psi(x,\mu)$, for all $x,\mu\geq 0$.
\item [(iv)] For all $x_1,x_2,\mu\geq 0$, we have $\psi(x_1+x_2,\mu)\leq {1\over 2}\psi(2,\mu)\left[\psi(x_1,\mu)+\psi(x_2,\mu)\right].$\vspace{0.2cm}
\end{itemize}
\end{lem}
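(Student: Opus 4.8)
The four assertions are elementary facts about the explicit function $\psi$ defined in \eqref{eq:6}, and the plan is to prove them in the order (i), (ii), (iii), (iv), because the last two build on the first two. Throughout I write $\psi(x,\mu)=x\,e^{u(x)}$ with $u(x):=\mu\sqrt{2\log(1+x)}$, noting that $u\geq 0$ and $u$ is nondecreasing on $[0,+\infty)$ since $\log(1+x)$ is. Assertion (i) is then immediate: for fixed $x\geq 0$ one has $\psi(x,\mu)=x\exp\big(\mu\sqrt{2\log(1+x)}\big)$ with $\sqrt{2\log(1+x)}\geq 0$, so $\mu\mapsto\psi(x,\mu)$ is nondecreasing (constantly $0$ when $x=0$). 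For the easy part of (ii), positivity of $\psi(\cdot,\mu)$ on $(0,+\infty)$ is clear since both factors are positive (while $\psi(0,\mu)=0$), and strict monotonicity follows at once from $\psi_x(x,\mu)=e^{u}\big(1+x\,u'(x)\big)\geq e^{u}\geq 1>0$, using $u'\geq 0$.

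The main work, and the step I expect to be the real obstacle, is the strict convexity in (ii) for $\mu>0$ (for $\mu=0$ one has $\psi(\cdot,0)=\mathrm{id}$, which is merely linear, so the assertion should be read for $\mu>0$). I would differentiate once more to get $\psi_{xx}(x,\mu)=e^{u}\big(2u'+x(u')^2+x\,u''\big)$, and then compute, for $x>0$, the explicit expressions $u'(x)=\mu\big[(1+x)\sqrt{2\log(1+x)}\big]^{-1}>0$ and $u''(x)=-\mu(1+x)^{-2}\big(2\log(1+x)\big)^{-1/2}\big(1+(2\log(1+x))^{-1}\big)<0$. The delicate point is precisely that $u''<0$, so one must show the positive terms dominate. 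Writing $p:=1/(1+x)\in(0,1)$ and $q:=1/(2\log(1+x))>0$, a short rearrangement using $xp=1-p$ factors the quantity as $2u'+x(u')^2+x u''=\mu p\sqrt{q}\,\big[\,2+(1-p)\mu\sqrt{q}-(1-p)(1+q)\,\big]$. Since $(1-p)\mu\sqrt{q}\geq 0$ and $(1-p)<1$, everything then reduces to the single key estimate $(1-p)q=\tfrac{x}{1+x}\cdot\tfrac{1}{2\log(1+x)}\leq\tfrac12$, which I would obtain from the elementary inequality $1-e^{-t}\leq t$ (with $t=\log(1+x)$), giving $(1-p)q=\tfrac{1-e^{-t}}{2t}\leq\tfrac12$. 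Consequently the bracket is $\geq 2-1-\tfrac12=\tfrac12>0$, hence $\psi_{xx}>0$ and $\psi(\cdot,\mu)$ is strictly convex.

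For (iii), since both sides carry the common factor $cx$ (and the inequality is a trivial equality when $\mu=0$), I may assume $\mu>0$, and the claim reduces after taking logarithms to $\sqrt{\log(1+cx)}\leq\sqrt{\log(1+c)}+\sqrt{\log(1+x)}$. I would prove this by combining the elementary bound $1+cx\leq(1+c)(1+x)$, which gives $\log(1+cx)\leq\log(1+c)+\log(1+x)$, with the subadditivity of the square root, $\sqrt{a+b}\leq\sqrt a+\sqrt b$; note that this in fact holds for all $c,x\geq 0$, the hypothesis $c\geq 1$ being more than enough.

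Finally, (iv) follows by chaining (iii) and the convexity from (ii). Applying (iii) with $c=2$ gives $\psi(x_1+x_2,\mu)=\psi\big(2\cdot\tfrac{x_1+x_2}{2},\mu\big)\leq\psi(2,\mu)\,\psi\big(\tfrac{x_1+x_2}{2},\mu\big)$, and then convexity of $\psi(\cdot,\mu)$ at the midpoint yields $\psi\big(\tfrac{x_1+x_2}{2},\mu\big)\leq\tfrac12\big(\psi(x_1,\mu)+\psi(x_2,\mu)\big)$, which combine to the asserted inequality. The only genuinely non-routine ingredient in the whole argument is the convexity estimate in (ii), and specifically the bound $(1-p)q\leq\tfrac12$ that tames the negative contribution of $u''$; everything else is bookkeeping around monotonicity, the subadditivity of the square root, and the reuse of (ii) and (iii).
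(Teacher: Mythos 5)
Your proof is correct, but note that the paper itself contains no argument for this lemma: it is imported wholesale from \cite{BuckdahnHuTang2018ECP} (Proposition 2.3 and the proof of Theorem 2.5 there), so there is no internal proof to compare against, and what you have produced is a self-contained verification of the cited facts. Your computations check out: with $u(x)=\mu\sqrt{2\log(1+x)}$ one indeed gets $\psi_{xx}=e^{u}\left(2u'+x(u')^{2}+xu''\right)$, and with $p=1/(1+x)$, $q=1/(2\log(1+x))$ and the identity $xp=1-p$, your factorization $\mu p\sqrt{q}\left[2+(1-p)\mu\sqrt{q}-(1-p)(1+q)\right]$ is exact; the decisive bound $(1-p)q=\frac{1-e^{-t}}{2t}\leq\frac{1}{2}$ (with $t=\log(1+x)$, via $1-e^{-t}\leq t$) makes the bracket at least $\frac{1}{2}$, so $\psi_{xx}>0$ on $(0,+\infty)$ for $\mu>0$. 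Your reductions for (iii) (the bound $1+cx\leq(1+c)(1+x)$ plus subadditivity of the square root) and for (iv) (apply (iii) with $c=2$, then midpoint convexity) are the natural ones, and your side remarks are accurate and worth keeping: as literally stated, (ii) is not strict at $\mu=0$ (where $\psi(\cdot,0)$ is the identity) and $\psi(0,\mu)=0$, so ``positive'' and ``strictly convex'' must be read on $(0,+\infty)$ and for $\mu>0$; moreover your argument shows the hypothesis $c\geq 1$ in (iii) is superfluous, the inequality holding for all $c\geq 0$. The only cosmetic caveat is that $u$ is not differentiable at $x=0$ (the derivative blows up there), so $\psi_x\geq 1$ and $\psi_{xx}>0$ should be asserted on the open half-line, with strict monotonicity and convexity extended to $[0,+\infty)$ by continuity at $0$ --- which your argument implicitly does.
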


For each $t\in \T$, we define the following function $\varphi$, which will be applied by It\^{o}-Tanaka's formula later.
\begin{equation}\label{eq:8}
\varphi(s,x;t):=(x+e)\exp\left(\gamma\sqrt{2s\log(x+e)}+{\gamma\over 2}\int_t^s \left(\gamma+\sqrt{2\over r}\right){\rm d}r\right), \ (s,x)\in [t,T]\times [0,+\infty),
\end{equation}
which is the function $\phi$ in \eqref{eq:4} with $\mu_s$ and $k_r$ defined in \eqref{eq:5}. We have, for each $t\in (0,T]$ and each $(s,x)\in [t,T]\times [0,+\infty)$,
$$
\varphi_x(s,x;t)=\varphi(s,x;t)\ {\gamma\sqrt{s}+\sqrt{2\log(x+e)} \over (x+e)\sqrt{2\log(x+e)}}>0,
$$
$$
\varphi_{xx}(s,x;t)=\varphi(s,x;t)\ {\gamma\sqrt{s}\left(
2\log(x+e)+\gamma \sqrt{2s\log(x+e)}-1\right)\over (x+e)^2\left(\sqrt{2\log(x+e)}\right)^3}>0,
$$
and
$$
\varphi_s(s,x;t)={\gamma\varphi(s,x;t)\over 2}\  \left({\sqrt{2\log(x+e)}+\sqrt{2}\over \sqrt{s}}+\gamma\right)>0.\vspace{0.2cm}
$$

Moreover, we have the following proposition.\vspace{0.1cm}

\begin{pro}\label{Pro:2.3}
We have the following assertions on $\varphi$:
\begin{itemize}
\item [(i)] For $t\in\T$, $\varphi(\cdot,\cdot;t)$ is continuous on $[t,T]\times [0,+\infty)$; And, for all $t\in (0,T]$, $\varphi(\cdot,\cdot;t)\in \mathcal {C}^{1,2}([t,T]\times [0,+\infty))$;
\item [(ii)] For all $t\in (0,T]$, $\varphi(\cdot,\cdot;t)$ satisfies the inequality in \eqref{eq:2}, i.e.,
$$
-\gamma \varphi_x(s,x;t)|z| +{1\over 2}\varphi_{xx}(s,x;t)|z|^2+\varphi_s(s,x;t)\geq 0,\ \ (s,x,z)\in [t,T]\times [0,+\infty)\times\R^d.
$$
\end{itemize}
\end{pro}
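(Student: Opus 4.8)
The plan is to treat the two assertions of \cref{Pro:2.3} separately: the regularity in (i) follows by inspecting the elementary building blocks of $\varphi$, while (ii) is reduced to the scalar inequality \eqref{eq:3} and then settled by an explicit algebraic simplification that exploits the critical choice \eqref{eq:5}.

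For (i), I would first note that $x+e\geq e$ forces $\log(x+e)\geq 1$, so $\sqrt{2\log(x+e)}\geq\sqrt{2}>0$ on $[0,+\infty)$ and the radicand $2s\log(x+e)$ is nonnegative for $s\geq 0$; moreover the integral in the exponent equals $\gamma(s-t)+2\sqrt{2}\,(\sqrt{s}-\sqrt{t})$, which is finite and continuous in $s$ for every $0\leq t\leq s\leq T$. Hence $\varphi(\cdot,\cdot;t)$ is a product and composition of continuous functions, and is continuous on $[t,T]\times[0,+\infty)$ for each $t\in\T$. For the $\mathcal{C}^{1,2}$ statement the key observation is that when $t\in(0,T]$ one has $s\geq t>0$, which keeps $\sqrt{s}\geq\sqrt{t}>0$ bounded away from the only singularity of the construction, namely $s=0$ (visible in the factor $1/\sqrt{s}$ of $\varphi_s$). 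On $[t,T]\times[0,+\infty)$ every elementary function entering $\varphi$ is then smooth and the denominators $x+e$, $\sqrt{2\log(x+e)}$ and $\sqrt{s}$ never vanish, so the chain rule yields precisely the formulas for $\varphi_s$, $\varphi_x$ and $\varphi_{xx}$ displayed just before the statement, each of which is continuous; this gives $\varphi(\cdot,\cdot;t)\in\mathcal{C}^{1,2}([t,T]\times[0,+\infty))$.

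For (ii), I would invoke the reduction already carried out in the introduction: since $\varphi_{xx}>0$, the elementary bound $2ab\leq a^2+b^2$ shows that \eqref{eq:3} (with $\phi=\varphi$) implies \eqref{eq:2}, so it suffices to verify
$$
-\frac{\gamma^2}{2}\,\frac{\varphi_x^2(s,x;t)}{\varphi_{xx}(s,x;t)}+\varphi_s(s,x;t)\geq 0.
$$
To this end I would substitute the three derivative formulas and abbreviate $u:=\sqrt{2\log(x+e)}\geq\sqrt{2}$ and $v:=\gamma\sqrt{s}\geq 0$. Using $\varphi>0$, a short computation gives
$$
\frac{\varphi_x^2}{\varphi_{xx}}=\varphi\,\frac{(v+u)^2u}{v\,(u^2+vu-1)}\qquad\text{and}\qquad \varphi_s=\frac{\gamma\varphi}{2\sqrt{s}}\,(u+\sqrt{2}+v),
$$
so that, after dividing by the positive factor $\gamma\varphi/(2\sqrt{s})$ and clearing the positive denominator $u^2+vu-1$ (positive because $u\geq\sqrt{2}$ gives $u^2-1\geq 1$), the displayed inequality becomes
$$
(u+\sqrt{2}+v)(u^2+vu-1)-(v+u)^2u\geq 0.
$$

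The only nontrivial point — and the step I expect to require the most care — is the bookkeeping in this last expansion. Carrying it out, the cubic and mixed terms $u^3$, $2u^2v$ and $uv^2$ cancel, leaving the remarkably simple quantity
$$
\sqrt{2}\,u^2-u-\sqrt{2}+v\,(\sqrt{2}\,u-1).
$$
Its nonnegativity is then immediate: since $u\geq\sqrt{2}$ we have $\sqrt{2}\,u-1\geq 1>0$, so the $v$-term is nonnegative; and $h(u):=\sqrt{2}\,u^2-u-\sqrt{2}$ satisfies $h(\sqrt{2})=0$ and $h'(u)=2\sqrt{2}\,u-1>0$ on $[\sqrt{2},+\infty)$, whence $h(u)\geq 0$ there. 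This establishes \eqref{eq:3}, and therefore the inequality in (ii). I anticipate no genuine difficulty beyond this algebra; the role of the critical choice \eqref{eq:5} is precisely to arrange the cancellations that reduce \eqref{eq:3} to the elementary one-variable estimate above.
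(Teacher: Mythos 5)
Your proof is correct and takes essentially the same approach as the paper: both reduce assertion (ii) to inequality \eqref{eq:3} via the completion-of-square argument from the introduction, substitute $v=\sqrt{2\log(x+e)}\geq\sqrt{2}$, and settle the resulting scalar inequality by elementary algebra (your exact expansion $(u+\sqrt{2}+v)(u^2+vu-1)-(v+u)^2u=\sqrt{2}\,u^2-u-\sqrt{2}+v(\sqrt{2}\,u-1)\geq 0$ checks out; the paper organizes the same computation as an upper bound on the fraction, namely $\frac{(\gamma\sqrt{s}+v)^2v}{v^2+\gamma\sqrt{s}\,v-1}-v\leq \gamma\sqrt{s}+\sqrt{2}$). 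Your more detailed verification of (i), which the paper dismisses as obvious, is also fine.
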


\begin{proof}
The first assertion is obvious. In order to prove Assertion (ii), it suffices to prove that inequality \eqref{eq:3} holds for the function $\varphi(\cdot,\cdot;t)$ with $t\in (0,T]$ by virtue of the analysis in the introduction. In fact, by a simple computation, we have, for each $(s,x)\in [t,T]\times [0,+\infty)$,
$$
-{\gamma^2\over 2}{\varphi^2_x(s,x;t)\over \varphi_{xx}(s,x;t)}=-{\gamma\varphi(s,x;t)\over 2}{
\left(\gamma\sqrt{s}+\sqrt{2\log(x+e)}\right)^2 \sqrt{2\log(x+e)}
\over \sqrt{s}\left(2\log(x+e)+\gamma\sqrt{s}\sqrt{2\log(x+e)}-1\right)}.
$$
Define $v=\sqrt{2\log(x+e)}$. Then,
$$
-{\gamma^2\over 2}{\varphi^2_x(s,x;t)\over \varphi_{xx}(s,x;t)}+\varphi_s(s,x;t)=
{\gamma\varphi(s,x;t)\over 2}\left[{1\over \sqrt{s}}\left(v-{\left(\gamma\sqrt{s}+v\right)^2v\over v^2+\gamma\sqrt{s}v-1} \right)+\gamma+\sqrt{2\over s}\right].
$$
Furthermore, in view of the fact of $v\geq \sqrt{2}$, we know that
$$
\begin{array}{lll}
\Dis {\left(\gamma\sqrt{s}+v\right)^2v\over v^2+\gamma\sqrt{s}v-1}-v&=&\Dis {\gamma\sqrt{s}(v^2+\gamma\sqrt{s}v-1)+v+\gamma\sqrt{s}\over  v^2+\gamma\sqrt{s}v-1}\vspace{0.2cm}\\
&\leq& \Dis \gamma\sqrt{s}+{v+\gamma\sqrt{s}\over {1\over 2}(v^2+\gamma\sqrt{s}v)}= \gamma\sqrt{s}+{2\over v}\leq \gamma\sqrt{s}+\sqrt{2}.
\end{array}
$$
Hence, for each $t\in (0,T]$,\vspace{0.1cm}
$$
-{\gamma^2\over 2}{\varphi^2_x(s,x;t)\over \varphi_{xx}(s,x;t)}+\varphi_s(s,x;t)\geq 0,\ \ (s,x)\in [t,T]\times [0,+\infty).\vspace{0.1cm}
$$
Then, Assertion (ii) is proved, and the proof is complete.
\end{proof}

The two functions $\psi$ and $\varphi$ defined respectively on \eqref{eq:6} and \eqref{eq:8} has the following connection.\vspace{0.1cm}

\begin{pro}\label{pro:2.4}
There exists a universal constant $K>0$ depending only on $\gamma$ and $T$ such that for all $t\in\T$ and $(s,x)\in [t,T]\times[0,+\infty)$,
\begin{equation}\label{eq:9}
\psi(x,\gamma\sqrt{s})\leq \varphi(s,x;t)\leq K\psi(x,\gamma\sqrt{s})+K.
\end{equation}
In particular, by letting $s=t$, we have
\begin{equation}\label{eq:10}
\psi(x,\gamma\sqrt{t})\leq \varphi(t,x;t)\leq K\psi(x,\gamma\sqrt{t})+K,\ \ (t,x)\in [0,T]\times[0,+\infty).
\end{equation}
\end{pro}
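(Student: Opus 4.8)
The plan is to establish the two inequalities in \eqref{eq:9} separately, keeping in mind throughout that
$\psi(x,\gamma\sqrt{s})=x\exp(\gamma\sqrt{2s\log(1+x)})$ while
$\varphi(s,x;t)=(x+e)\exp\bigl(\gamma\sqrt{2s\log(x+e)}+I(s;t)\bigr)$, where $I(s;t):=\frac{\gamma}{2}\int_t^s\bigl(\gamma+\sqrt{2/r}\bigr)\,{\rm d}r$. For the lower bound I would simply observe that each factor of $\varphi$ dominates the corresponding factor of $\psi$: one has $x\le x+e$; since $1+x\le x+e$ one has $\log(1+x)\le\log(x+e)$ and hence $\gamma\sqrt{2s\log(1+x)}\le\gamma\sqrt{2s\log(x+e)}$; and finally $I(s;t)\ge0$ because $s\ge t$ and the integrand is positive. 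Multiplying these three comparisons gives $\psi(x,\gamma\sqrt{s})\le\varphi(s,x;t)$ at once, with no constant needed.

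The upper bound is the substantive part. First I would dispose of the integral term: since $r\mapsto\sqrt{2/r}$ is integrable near $0$, one computes $I(s;t)=\frac{\gamma}{2}\bigl(\gamma(s-t)+2\sqrt{2}(\sqrt{s}-\sqrt{t})\bigr)\le\frac{\gamma}{2}\bigl(\gamma T+2\sqrt{2T}\bigr)=:c_0$ uniformly over $0\le t\le s\le T$, so that $\exp(I(s;t))\le e^{c_0}$, a constant depending only on $\gamma$ and $T$. It then remains to compare $(x+e)\exp(\gamma\sqrt{2s\log(x+e)})$ with $\psi(x,\gamma\sqrt{s})$. The crux is to control the ratio of the two exponentials, i.e.\ the quantity $\gamma\sqrt{2s}\,[\sqrt{\log(x+e)}-\sqrt{\log(1+x)}]$. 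Rationalizing, $\sqrt{\log(x+e)}-\sqrt{\log(1+x)}=\frac{\log((x+e)/(1+x))}{\sqrt{\log(x+e)}+\sqrt{\log(1+x)}}$; since $(x+e)/(1+x)\in(1,e]$ the numerator lies in $(0,1]$, while $\log(x+e)\ge1$ forces the denominator to be $\ge1$, so this difference is bounded by $1$ for every $x\ge0$. Hence the exponential ratio is at most $\exp(\gamma\sqrt{2T})$, again a constant of the required type.

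Finally I would split on the size of $x$ to extract the two constants in $K\psi+K$. For $x\ge e$ one has $x+e\le2x$, so combining the prefactor bound with the exponential-ratio bound gives $(x+e)\exp(\gamma\sqrt{2s\log(x+e)})\le 2\exp(\gamma\sqrt{2T})\,\psi(x,\gamma\sqrt{s})$, which is the multiplicative part. For $0\le x\le e$ the whole expression is bounded by a constant (by continuity on a compact set, or via the explicit bound $2e\exp(\gamma\sqrt{2T\log(2e)})$), which gets absorbed into the additive part. Taking $K$ to be a suitable multiple of these constants times $e^{c_0}$ yields \eqref{eq:9}, and specializing to $s=t$ gives \eqref{eq:10}. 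I expect the only delicate point to be the uniform bound on $\sqrt{\log(x+e)}-\sqrt{\log(1+x)}$, where one must simultaneously handle $x=0$ (the difference equals $1$, but the $\psi$-term vanishes there and is matched by the additive constant) and the regime $x\to\infty$ (the difference tends to $0$); the rationalization above handles both in one stroke.
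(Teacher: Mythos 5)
Your proof is correct, and its skeleton matches the paper's: both get the lower bound by termwise domination ($x\le x+e$, $\log(1+x)\le\log(x+e)$, nonnegativity of the integral term), and both get the upper bound by controlling the ratio of $\varphi$ to $\psi$ plus an additive constant, splitting according to whether $x$ is small or large. The genuine difference is in how the critical quantity $\sqrt{\log(x+e)}-\sqrt{\log(1+x)}$ is handled. The paper never bounds it explicitly: it packages the ratio into
$$
H_1(x,\gamma,T)=\frac{x+e}{x}\exp\left(\gamma\sqrt{T}\left(\sqrt{2\log(x+e)}-\sqrt{2\log(x+1)}\right)+\frac{\gamma^2T}{2}+\gamma\sqrt{2T}\right)
$$
and invokes a soft argument --- $H_1$ is continuous on $[1,+\infty)$ and tends to $\exp\left(\frac{\gamma^2T}{2}+\gamma\sqrt{2T}\right)$ as $x\to+\infty$, hence bounded --- so the constant $K$ is asserted to exist but not exhibited. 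You instead rationalize,
$$
\sqrt{\log(x+e)}-\sqrt{\log(1+x)}=\frac{\log\left((x+e)/(1+x)\right)}{\sqrt{\log(x+e)}+\sqrt{\log(1+x)}}\le 1,
$$
using $(x+e)/(1+x)\in(1,e]$ and $\log(x+e)\ge 1$, which yields a uniform, fully explicit bound and hence an explicit constant, e.g. $K=e^{c_0}\max\left\{2e^{\gamma\sqrt{2T}},\,2e\exp\left(\gamma\sqrt{2T\log(2e)}\right)\right\}$ with $c_0=\frac{\gamma^2T}{2}+\gamma\sqrt{2T}$. What your route buys is constructiveness (an explicit $K$, useful if one wants to track the constant appearing in \eqref{eq:12} and \eqref{eq:7}) and the avoidance of any limit computation; what the paper's route buys is brevity, since continuity plus a finite limit at infinity dispenses with the algebra. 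Both arguments handle $t=0$ correctly because $\int_0^s\sqrt{2/r}\,{\rm d}r$ converges, a point you make explicit and the paper leaves implicit.
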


\begin{proof}
The first inequality in \eqref{eq:9} is clear, and \eqref{eq:10} is a direct corollary of \eqref{eq:9}. We now prove the second inequality in \eqref{eq:9}. In fact, for each $t\in\T$ and $(s,x)\in [t,T]\times[1,+\infty)$,
$$
\begin{array}{lll}
\Dis{\varphi(s,x;t) \over\psi(x,\gamma\sqrt{s})+1}&=&\Dis {(x+e)\exp\left(\gamma\sqrt{2s\log(x+e)}+{\gamma\over 2}\int_t^s \left(\gamma+\sqrt{2\over r}\right){\rm d}r\right)\over x\exp\left(\gamma\sqrt{2s\log(1+x)}\right)+1}\vspace{0.2cm}\\
&\leq & {x+e\over x}\exp\left(\gamma\sqrt{T}\left(\sqrt{2\log(x+e)}-\sqrt{2\log(x+1)}\right)
+{\gamma^2T\over 2}+\gamma\sqrt{2T}\right)\vspace{0.2cm}\\
&=: & \Dis H_1(x,\gamma,T).
\end{array}
$$
And, in the case of $x\in [0,1]$,
$$
{\varphi(s,x;t) \over\psi(x,\gamma\sqrt{s})+1}\leq (e+1)\exp\left(\gamma\sqrt{2T\log(1+e)}+{\gamma^2T\over 2}+\gamma\sqrt{2T}\right)=: H_2(\gamma, T).
$$
Hence, for all $x\in [0,+\infty)$, we have
\begin{equation}\label{eq:11}
{\varphi(s,x;t) \over\psi(x,\gamma\sqrt{s})+1}\leq H_1(x,\gamma,T){\bf 1}_{x\geq 1}+H_2(\gamma, T){\bf 1}_{0\leq x<1}.
\end{equation}
With inequality \eqref{eq:11} in hand and in view of the fact that the function $H_1(x,\gamma, T)$ is continuous on $[1,+\infty)$ and tends to \vspace{-0.1cm}
$$\exp\left({\gamma^2T\over 2}+\gamma\sqrt{2T}\right)\vspace{-0.1cm}$$
as $x\To +\infty$, we obtain the second inequality in \eqref{eq:9}. The proof is complete.
\end{proof}

The following \cref{pro:2.5} establish some a priori estimate for the solution to a BSDE with an $L^p\ (p>1)$ terminal value and a linear-growth generator.\vspace{0.1cm}

\begin{pro}\label{pro:2.5}
Let $\xi$ be a terminal condition and $g$ be a generator which is continuous in $(y,z)$. If $g$ satisfies assumption \ref{H1} with parameters $\beta$ and $\gamma$, $\left(\xi, g(t,0,0)\right)\in L^p\times\lcal^p$ for some $p>1$, and $(Y_t,Z_t)_{t\in\T}$ is a solution in $\s^p\times \mcal^p$ to BSDE$(\xi,g)$, then $\ps$, for each $t\in \T$, we have
\begin{equation}\label{eq:12}
|Y_t|\leq \psi(|Y_t|,\gamma\sqrt{t})\leq C\E\left[\left.\psi\left(|\xi|+\int_0^T|g(t,0,0)|{\rm d}t,\ \gamma\sqrt{T}\right)\right|\F_t\right]+C,
\end{equation}
where $C$ is a positive constant depending only on $(\beta,\gamma,T)$, and $\psi$ is defined in \eqref{eq:6}.
\end{pro}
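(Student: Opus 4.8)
The plan is to run an It\^o--Tanaka argument on the function $\varphi$ from \eqref{eq:8}, after shifting the state process so as to absorb the source term $g(\cdot,0,0)$ into the terminal data. The first inequality in \eqref{eq:12} is immediate, since $\psi(x,\mu)=x\exp(\mu\sqrt{2\log(1+x)})\ge x$ for $\mu\ge 0$. For the second one, set $A_s:=\int_0^s|g(r,0,0)|\,{\rm d}r$ and $V_s:=|Y_s|+A_s$, so that $V_s\ge |Y_s|$, $V_T=|\xi|+\int_0^T|g(t,0,0)|\,{\rm d}t$, and $\langle V\rangle_s=\int_0^s|Z_r|^2\,{\rm d}r$. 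The point of adding $A_s$ is precisely to cancel the awkward $|g(\cdot,0,0)|\,\varphi_x$ term that would otherwise appear in the drift.

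Fix $t\in(0,T]$. By \cref{Pro:2.3}(i) the function $\varphi(\cdot,\cdot;t)$ is $\mathcal C^{1,2}$ on $[t,T]\times[0,+\infty)$, so for any $u\in[t,T]$ It\^o--Tanaka's formula applied to $\varphi(s,V_s;t)$ on $[u,T]$ produces a drift
$$\varphi_s+\varphi_x\big(-{\rm sgn}(Y_s)g(s,Y_s,Z_s)+|g(s,0,0)|\big)+\tfrac12\varphi_{xx}|Z_s|^2,$$
a martingale $\int \varphi_x\,{\rm sgn}(Y_s)Z_s\cdot{\rm d}B_s$, and a nonnegative local-time contribution $\int\varphi_x\,{\rm d}L_s^0\ge 0$ (as $\varphi_x>0$). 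Using \ref{H1} we have $-{\rm sgn}(Y_s)g+|g(s,0,0)|\ge -\beta|Y_s|-\gamma|Z_s|$, and then \cref{Pro:2.3}(ii) (inequality \eqref{eq:2}) absorbs the $-\gamma|Z_s|\varphi_x+\tfrac12\varphi_{xx}|Z_s|^2+\varphi_s$ part, so the drift is at least $-\beta|Y_s|\varphi_x\ge-\beta V_s\varphi_x(s,V_s;t)$. Since $V\varphi_x(s,V;t)\le\big(1+\tfrac{\gamma\sqrt T}{\sqrt2}\big)\varphi(s,V;t)$ (because $\tfrac{V}{V+e}\le1$ and $\log(V+e)\ge1$), the drift is bounded below by $-C_1\varphi(s,V_s;t)$ with $C_1:=\beta\big(1+\tfrac{\gamma\sqrt T}{\sqrt2}\big)$, a constant depending only on $(\beta,\gamma,T)$.

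Consequently, dropping the local-time term and localizing the stochastic integral by stopping times $\tau_n\uparrow T$, then taking $\E[\cdot\,|\,\F_u]$ and letting $n\to\infty$, we obtain
$$\varphi(u,V_u;t)\le \E\big[\varphi(T,V_T;t)\,\big|\,\F_u\big]+C_1\,\E\Big[\int_u^T\varphi(s,V_s;t)\,{\rm d}s\,\Big|\,\F_u\Big],\qquad u\in[t,T].$$
The passage to the limit is justified because $\sup_{s\in[t,T]}\varphi(s,V_s;t)\in L^1$: indeed, by \cref{pro:2.4} and \cref{Lem:2.2} it is dominated by $K\psi(\sup_s V_s,\gamma\sqrt T)+K$, and since $\sup_s V_s\in L^p$ (as $Y\in\s^p$ and $g(\cdot,0,0)\in\lcal^p$) while $\psi(x,\gamma\sqrt T)\le C_\epsilon(1+x)^{1+\epsilon}$ for any $\epsilon>0$, choosing $1+\epsilon\le p$ makes this integrable; the same bound also ensures the localized martingale converges. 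Now condition further on $\F_t$ and set $h(u):=\E[\varphi(u,V_u;t)\,|\,\F_t]$; by the tower property and conditional Fubini, $h(u)\le\alpha+C_1\int_u^T h(s)\,{\rm d}s$ with $\alpha:=\E[\varphi(T,V_T;t)\,|\,\F_t]$. Backward Gronwall then yields $h(u)\le\alpha\, e^{C_1(T-u)}$, and at $u=t$, using $h(t)=\varphi(t,V_t;t)$ (as $V_t$ is $\F_t$-measurable), $\varphi(t,V_t;t)\le e^{C_1T}\,\E[\varphi(T,V_T;t)\,|\,\F_t]$.

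Finally I convert back to $\psi$: by \eqref{eq:10} and $V_t\ge|Y_t|$, $\psi(|Y_t|,\gamma\sqrt t)\le\varphi(t,|Y_t|;t)\le\varphi(t,V_t;t)$, while by \eqref{eq:9} together with $V_T=|\xi|+\int_0^T|g(t,0,0)|\,{\rm d}t$, $\varphi(T,V_T;t)\le K\psi(V_T,\gamma\sqrt T)+K$. Combining, \eqref{eq:12} follows with $C:=Ke^{C_1T}$, depending only on $(\beta,\gamma,T)$. The case $t=0$ (excluded above, since $\varphi(\cdot,\cdot;0)$ is only continuous) is recovered by letting $t\downarrow 0$ and invoking the continuity of $s\mapsto Y_s$ and the right-continuity at $0$ of $s\mapsto\E[\psi(V_T,\gamma\sqrt T)\,|\,\F_s]$. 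The step I expect to be most delicate is the Gronwall argument: because $\varphi(\cdot,\cdot;t)$ carries the base point $t$ in its very definition, one must run It\^o from a generic $u\ge t$ with this \emph{fixed} function and only then condition down to $\F_t$; and the accompanying integrability/localization needed to discard the stochastic integral has to be checked against the super-linear growth of $\varphi$.
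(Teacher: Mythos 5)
Your proof is correct, and it shares the paper's skeleton: apply It\^{o}--Tanaka to $\varphi(s,\cdot\,;t)$ from \eqref{eq:8} along a shifted version of $|Y|$, use \cref{Pro:2.3}(ii) to absorb the $-\gamma|z|\varphi_x+\tfrac12\varphi_{xx}|z|^2+\varphi_s$ part, localize by stopping times, translate between $\varphi$ and $\psi$ via \cref{pro:2.4} and \cref{Lem:2.2}, and recover $t=0$ by continuity. The genuine difference is how the $\beta|y|$ term from \ref{H1} is handled. The paper absorbs it into the state variable by an exponential weight, setting $\bar Y_s=e^{\beta s}|Y_s|+\int_0^s e^{\beta r}|g(r,0,0)|\,{\rm d}r$ and $\bar Z_s=e^{\beta s}{\rm sgn}(Y_s)Z_s$, so that (applying \cref{Pro:2.3}(ii) with $z=e^{\beta s}Z_s$) the drift of $\varphi(s,\bar Y_s;t)$ is nonnegative: the process is a local submartingale and optional stopping gives $\varphi(t,\bar Y_t;t)\le\E[\varphi(\tau_n,\bar Y_{\tau_n};t)|\F_t]$ with no Gronwall step; the price is the factor $e^{\beta T}$ inside $\psi$, removed at the end by \cref{Lem:2.2}(iii). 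You keep $V_s=|Y_s|+A_s$ unweighted, which leaves a residual drift $-\beta|Y_s|\varphi_x$; your bound $x\varphi_x(s,x;t)\le\bigl(1+\gamma\sqrt{T}/\sqrt{2}\bigr)\varphi(s,x;t)$ is correct, and the backward conditional Gronwall closes the argument, with the integrability of $\sup_s\varphi(s,V_s;t)$ justified exactly as in the paper (the $\psi(x,\mu)\le C_\eps(1+x)^{1+\eps}$ bound is Remark 1.2 of \cite{HuTang2018ECP}, which the paper invokes at the outset). What your route costs is precisely the delicacy you flag: for each fixed $u$ the inequality $h(u)\le\alpha+C_1\int_u^T h(s)\,{\rm d}s$ holds only outside a $u$-dependent null set, so to run Gronwall pathwise you need a jointly measurable version of $(u,\omega)\mapsto\E[\varphi(u,V_u;t)|\F_t]$, a Fubini argument to get the inequality for a.e.\ $(u,\omega)$, and a separate use of it at the single point $u=t$ (where it holds a.s.\ directly) --- all standard, but it must be said. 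Note you could have bypassed Gronwall entirely: your own drift bound shows $e^{C_1 s}\varphi(s,V_s;t)$ is a local submartingale, which would make your variant exactly parallel to the paper's one-line optional-stopping conclusion.
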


\begin{proof}
Note first that if $\left(\xi, g(t,0,0)\right)\in L^p\times\lcal^p$ for some $p>1$, then
$$\psi\left(|\xi|+\int_0^T|g(t,0,0)|{\rm d}t,\ \mu\right)\in L^1$$
for any $\mu\geq 0$, which has been shown in Remark 1.2 of \cite{HuTang2018ECP}. Define
$$
\bar Y_t:=e^{\beta t}|Y_t|+\int_0^t e^{\beta s}|g(s,0,0)|{\rm d}s\ \
{\rm and}\ \ \bar Z_t:=e^{\beta t}{\rm sgn}(Y_t)Z_t,\ \ t\in \T,
$$
where ${\rm sgn}(y)={\bf 1}_{y>0}-{\bf 1}_{y\leq 0}$. It then follows from It\^{o}-Tanaka's formula that, with $t\in\T$,
$$
\bar Y_t=\bar Y_T+\int_t^T e^{\beta s}\left({\rm sgn}(Y_s)g(s,Y_s,Z_s)-\beta |Y_s|-|g(s,0,0)|\right){\rm d}s-\int_t^T \bar Z_s \cdot {\rm d}B_s-\int_t^T e^{\beta s} {\rm d}L_s,
$$
where $L_\cdot$ denotes the local time of $Y_\cdot$ at $0$. Now, fix $t\in (0,T]$ and apply It\^{o}-Tanaka's formula to the process $\varphi(s, \bar Y_s; t)$, where the function $\varphi(\cdot,\cdot;t)$ is defined in \eqref{eq:8}, to derive, in view of assumption \ref{H1},
$$
\hspace*{-0.4cm}\begin{array}{lll}
&&\Dis {\rm d}\varphi(s,\bar Y_s;t)\vspace{0.1cm}\\
&=&\Dis e^{\beta s}\varphi_x(s,\bar Y_s;t)
\left(-{\rm sgn}(Y_s)g(s,Y_s,Z_s)+\beta |Y_s|+|g(s,0,0)|\right){\rm d}s+\varphi_x(s,\bar Y_s;t)\bar Z_s \cdot {\rm d}B_s\vspace{0.1cm}\\
&&\Dis +e^{\beta s}\varphi_x(s,\bar Y_s;t){\rm d}L_s+{1\over 2}e^{2\beta s}\varphi_{xx}(s,\bar Y_s;t)|Z_s|^2{\rm d}s+\varphi_s(s,\bar Y_s;t){\rm d}s
\vspace{0.1cm}\\
&\geq &\left[-\gamma e^{\beta s}\varphi_x(s,\bar Y_s;t)|Z_s|+{1\over 2}e^{2\beta s}\varphi_{xx}(s,\bar Y_s;t)|Z_s|^2+\varphi_s(s,\bar Y_s;t)\right]{\rm d}s+\varphi_x(s,\bar Y_s;t)\bar Z_s \cdot {\rm d}B_s.
\end{array}
$$
Furthermore, by letting $x=\bar Y_s$ and $z=e^{\beta s}Z_s$ in Assertion (ii) of \cref{Pro:2.3} we get that
\begin{equation}\label{eq:13}
{\rm d}\varphi(s,\bar Y_s;t)\geq \varphi_x(s,\bar Y_s;t)\bar Z_s \cdot {\rm d}B_s,\ \ s\in [t,T].\vspace{0.1cm}
\end{equation}
Let us consider, for each integer $n\geq 1$,  the following stopping time
$$
\tau_n:=\inf\left\{s\in [t,T]: \int_t^s \left[\varphi_x(r,\bar Y_r;t)\right]^2|\bar Z_r|^2{\rm d}r\geq n \right\}\wedge T,
$$
with the convention that $\inf \Phi=+\infty$. It follows from the inequality \eqref{eq:13} and the definition of $\tau_n$ that for each $t\in (0,T]$ and $n\geq 1$,
$$
\varphi(t,\bar Y_t;t)\leq \E\left[\left. \varphi(\tau_n,\bar Y_{\tau_n}; t) \right|\F_t\right].\vspace{-0.1cm}
$$
Thus, thanks to \cref{pro:2.4}, we know the existence of a positive constant $K$ depending only on $\gamma$ and $T$ such that
$$
\psi(\bar Y_t,\gamma\sqrt{t})\leq \varphi(t,\bar Y_t;t)\leq \E\left[\left. \varphi(\tau_n,\bar Y_{\tau_n};t) \right|\F_t\right]\leq K \E\left[\left. \psi(\bar Y_{\tau_n},\gamma\sqrt{\tau_n})\right|\F_t\right]+K,
$$
And, by virtue of \cref{Lem:2.2} and the fact that
$$
|Y_t|\leq |Y_t|+\int_0^t |g(s,0,0)|{\rm d}s\leq \bar Y_t\leq e^{\beta T}\left(|Y_t|+\int_0^t |g(s,0,0)|{\rm d}s\right),
$$
we obtain that for each $t\in (0,T]$ and $n\geq 1$,
$$
|Y_t|\leq\psi\left(|Y_t|,\gamma\sqrt{t}\right)\leq K \psi\left(e^{\beta T},\gamma\sqrt{{\tau_n}}\right)\E\left[\left. \psi\left(|Y_{\tau_n}|+\int_0^{\tau_n} |g(s,0,0)|{\rm d}s,\ \gamma\sqrt{{\tau_n}}\right)\right|\F_t\right]+K,
$$
from which the inequality \eqref{eq:12} follows for $t\in (0,T]$ by sending $n$ to infinity. Finally, in view of the continuity of $Y_\cdot$ and the martingale in the right side hand of \eqref{eq:12} with respect to the time variable $t$, we know that \eqref{eq:12} holds still true for $t=0$. The proposition is then proved.
\end{proof}

\begin{rmk}\label{rmk:2.6}
We specially point out that, to the best of our knowledge, under the critical case: $\mu=\gamma\sqrt{T}$, the method of the dual representation used in \cite{HuTang2018ECP} can not be applied to obtain the desired a priori estimate as that in \eqref{eq:12} at the time $t=0$.\vspace{0.2cm}
\end{rmk}

Now, we give the proof of the existence part of \cref{thm:MainResult}.

\begin{proof}[The proof of the existence part of \cref{thm:MainResult}]
Let us fix two positive integers $n$ and $p$. Set $\xi^{n,p}:=\xi^+\wedge n-\xi^-\wedge p$, $g^{n,p}(t,0,0):=g^+(t,0,0)\wedge n-g^-(t,0,0)\wedge p$ and $g^{n,p}(t,y,z):=g(t,y,z)-g(t,0,0)+g^{n,p}(t,0,0)$. As the terminal condition $\xi^{n,p}$ and $g^{n,p}(t,0,0)$ are bounded (hence square-integrable) and $g^{n,p}(t,y,z)$ is a continuous and linear-growth generator, in view of the existence result in \cite{LepeltierSanMartin1997SPL}, BSDE$(\xi^{n,p},g^{n,p})$ admits a minimal solution $(Y^{n,p}_\cdot, Z^{n,p}_\cdot)$ in $\s^2\times\mcal^2$. It then follows from \cref{pro:2.5} that there exists a positive constant $C$ depending only on $(\beta,\gamma,T)$ such that for each $t\in\T$ and each $n,p\geq 1$,
\begin{equation}\label{eq:14}
\hspace*{-0.2cm}\begin{array}{lll}
|Y^{n,p}_t|\leq \psi(|Y^{n,p}_t|,\gamma\sqrt{t})&\leq& \Dis C\E\left[\left.\psi\left(|\xi^{n,p}|+\int_0^T|g^{n,p}(t,0,0)|{\rm d}t,\ \gamma\sqrt{T}\right)\right|\F_t\right]+C\vspace{0.2cm}\\
&\leq& \Dis C\E\left[\left.\psi\left(|\xi|+\int_0^T|g(t,0,0)|{\rm d}t,\ \gamma\sqrt{T}\right)\right|\F_t\right]+C.
\end{array}
\end{equation}
Since $Y^{n,p}_\cdot$ is nondecreasing in $n$ and non-increasing in $n$ by the comparison theorem, then in view of \eqref{eq:14} and assumption \ref{H1}, by virtue of the localization method put forward in
\cite{BriandHu2006PTRF}, we know that there exists an $(\F_t)$-progressively measurable process $(Z_t)_{t\in\T} $ such that $(Y_\cdot:=\inf_p\sup_n Y^{n,p}_\cdot, \ Z_\cdot)$ is an adapted solution to BSDE$(\xi,g)$. Finally, sending $n$ and $p$ to infinity in \eqref{eq:14} yields the inequality \eqref{eq:7}, and then $\left(\psi\left(|Y_t|,\gamma\sqrt{t}\right)\right)_{t\in\T}$ belongs to class (D). The proof is complete.
\end{proof}

\begin{rmk}\label{rmk:2.7}
From the above proof, it is easy to see that the linear-growth assumption \ref{H1} in \cref{thm:MainResult} and \cref{pro:2.5} can be easily weakened to the following one-sided linear-growth assumption: There exist two real constants $\beta\geq 0$, $\gamma>0$ and a nonnegative, real-valued and $(\F_t)$-progressively measurable process $(f_t)_{t\in \T}$ such that $\as$, for each $(y,z)\in \R\times\R^d$,
$$
{\rm sgn}(y)g(\omega,t,y,z)\leq f_t(\omega)+\beta|y|+\gamma |z|\ \ {\rm and}\ \ |g(\omega,t,y,z)|\leq f_t(\omega)+ h(|y|)+\gamma |z|,
$$
where $h(\cdot)$ is a deterministic
continuous nondecreasing function with $h(0)=0$. In this case, $|g(t,0,0)|$ in the conditions of \cref{thm:MainResult} and \cref{pro:2.5} only needs to be replaced with $f_t$.\vspace{0.2cm}
\end{rmk}

In order to prove the uniqueness part of \cref{thm:MainResult}, we need the following two lemmas, which can be found in \cite{HuTang2018ECP}. \vspace{0.1cm}

\begin{lem}\label{lem:2.8}
For each $x\in \R$, $y\geq 0$ and $\mu>0$, we have
$$
e^xy\leq e^{x^2\over 2\mu^2}+e^{2\mu^2}\psi(y,\mu),
$$
where the function $\psi$ is defined in \eqref{eq:6} again.\vspace{0.2cm}
\end{lem}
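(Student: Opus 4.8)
The plan is to read this as a Young-type inequality for the product $e^x\cdot y$ and to prove it by a dichotomy governed by the threshold that naturally separates the two terms on the right-hand side. I would first dispose of the trivial case $y=0$, where the left-hand side vanishes while $e^{x^2/(2\mu^2)}>0$. For $y>0$ I would introduce the shorthand $v:=\sqrt{2\log(1+y)}\ge 0$, so that $\psi(y,\mu)=y\,e^{\mu v}$ and $\log(1+y)=v^2/2$, and then split according to the sign of $x-\mu v-2\mu^2$.

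In the first case, $x\le \mu v+2\mu^2$, the second term alone suffices: from $e^x\le e^{\mu v}e^{2\mu^2}$ and $y>0$ we get $e^x y\le e^{2\mu^2}\,y\,e^{\mu v}=e^{2\mu^2}\psi(y,\mu)$, and since $e^{x^2/(2\mu^2)}>0$ the claimed bound follows at once.

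In the second case, $x>\mu v+2\mu^2$, I would instead show that the first term alone dominates, i.e. $e^x y\le e^{x^2/(2\mu^2)}$. Taking logarithms, this is $x+\log y\le x^2/(2\mu^2)$, and using the elementary bound $\log y\le \log(1+y)=v^2/2$ it is enough to prove $x+v^2/2\le x^2/(2\mu^2)$. The hypothesis of this case rewrites as $0\le v< x/\mu-2\mu$, where $x>2\mu^2$ forces $x/\mu-2\mu>0$, so squaring preserves the inequality and gives $v^2/2<\tfrac12(x/\mu-2\mu)^2=x^2/(2\mu^2)-2x+2\mu^2$. Substituting yields $x+v^2/2< x^2/(2\mu^2)-x+2\mu^2$, which is $\le x^2/(2\mu^2)$ precisely because $x\ge 2\mu^2$ holds in this case; this closes the inequality.

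The argument is essentially a completion-of-the-square computation, so there is no deep obstacle. The only genuine choice, and the step I expect to be the crux, is identifying the correct splitting threshold $\mu v+2\mu^2$ (equivalently, pinning down the constant $2\mu^2$ in the exponent $e^{2\mu^2}$), since it is exactly this value that makes the residual quadratic $x^2/(2\mu^2)-x+2\mu^2-(x+v^2/2)$ nonnegative on the relevant range. One should also verify the direction of the inequality when squaring $v<x/\mu-2\mu$, for which the positivity of $x/\mu-2\mu$ guaranteed by $x>2\mu^2$ is essential.
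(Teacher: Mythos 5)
Your proof is correct: the case $y=0$ is trivial, in the first case the second term alone dominates, and in the second case the squaring step is legitimate since both sides of $v<x/\mu-2\mu$ are nonnegative (as $x>2\mu^2$), and that same bound $x\geq 2\mu^2$ absorbs the residual $-x+2\mu^2$. Note that this paper gives no proof of its own---it quotes the lemma from \cite{HuTang2018ECP}---and the proof there is essentially the same dichotomy at the threshold $x=\mu\sqrt{2\log(1+y)}+2\mu^2$, so your proposal reproduces the cited argument as a valid self-contained proof.
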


\begin{lem}\label{lem:2.9}
Let $(q_t)_{t\in\T}$ be a $d$-dimensional and $(\F_t)$-progressively measurable process with $|q_\cdot|\leq \gamma$ almost surely. For each $t\in\T$, if $0\leq \lambda<{1 \over 2\gamma^2 (T-t)}$, then
$$
\E\left[\left.e^{\lambda \left|\int_t^Tq_s\cdot {\rm d}B_s\right|^2}\right|\F_t\right]\leq {1\over \sqrt{1-2\lambda\gamma^2(T-t)}}.\vspace{0.2cm}
$$
\end{lem}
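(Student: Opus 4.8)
The plan is to remove the square in the exponent by a Gaussian linearization, turning the quadratic exponential moment into a linear one that the bound $|q_\cdot|\le\gamma$ controls directly, and then to evaluate a single elementary Gaussian integral. Throughout I write $M_s:=\int_t^s q_r\cdot{\rm d}B_r$ for $s\in[t,T]$ and $M:=M_T$, so that $M$ is a real-valued random variable with quadratic variation $\langle M\rangle_T=\int_t^T|q_r|^2{\rm d}r\le\gamma^2(T-t)$ almost surely. The case $\lambda=0$ is trivial, so I assume $\lambda>0$.

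\emph{Step 1 (Gaussian linearization).} Let $N\sim\mathcal N(0,1)$ be a standard Gaussian random variable, defined on an auxiliary space and independent of $\F_T$, and let $\E_N$ denote expectation with respect to $N$ alone. The identity $\E_N[e^{aN}]=e^{a^2/2}$ with $a=\sqrt{2\lambda}\,M$ yields the pathwise representation
$$
e^{\lambda M^2}=\E_N\left[e^{\sqrt{2\lambda}\,MN}\right].
$$
As the integrand is nonnegative, Tonelli's theorem lets me interchange $\E[\,\cdot\,|\F_t]$ and $\E_N$:
$$
\E\left[\left.e^{\lambda M^2}\right|\F_t\right]=\E_N\left[\E\left[\left.e^{\sqrt{2\lambda}\,NM}\right|\F_t\right]\right].
$$

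\emph{Step 2 (Linear exponential estimate).} For a fixed real $\theta$, I would consider the stochastic exponential $\mathcal E_s:=\exp\left(\theta M_s-\tfrac12\theta^2\langle M\rangle_s\right)$, $s\in[t,T]$. It is a nonnegative local martingale with $\mathcal E_t=1$, hence a supermartingale, so $\E[\mathcal E_T|\F_t]\le1$. Using $\langle M\rangle_T\le\gamma^2(T-t)$ I bound
$$
e^{\theta M}=\mathcal E_T\,\exp\left(\tfrac12\theta^2\langle M\rangle_T\right)\le \mathcal E_T\,e^{\frac12\theta^2\gamma^2(T-t)},
$$
and taking $\E[\,\cdot\,|\F_t]$ gives $\E[e^{\theta M}|\F_t]\le e^{\frac12\theta^2\gamma^2(T-t)}$. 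Specializing to $\theta=\sqrt{2\lambda}\,N$ (legitimate inside $\E_N$, where $N$ is frozen from the viewpoint of the Brownian integral) produces
$$
\E\left[\left.e^{\sqrt{2\lambda}\,NM}\right|\F_t\right]\le e^{\lambda\gamma^2(T-t)N^2}.
$$

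\emph{Step 3 (Gaussian integral).} Combining Steps 1 and 2,
$$
\E\left[\left.e^{\lambda M^2}\right|\F_t\right]\le\E_N\left[e^{\lambda\gamma^2(T-t)N^2}\right].
$$
Setting $c:=\lambda\gamma^2(T-t)$, the hypothesis $\lambda<\tfrac{1}{2\gamma^2(T-t)}$ is exactly $c<\tfrac12$, which makes the standard one-dimensional Gaussian integral $\E_N[e^{cN^2}]=(1-2c)^{-1/2}$ finite and equal to the claimed right-hand side.

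I expect the one genuinely non-routine idea to be the Gaussian linearization of Step 1, which trades the quadratic exponential for a family of linear ones; once it is in hand the rest is standard. The technical point to watch is the measure-theoretic bookkeeping: the Tonelli interchange is licensed by nonnegativity of every integrand, and after freezing $N$ the inner conditional expectation is that of a linear exponential of a Brownian integral, to which the supermartingale estimate applies. Notably no extra integrability is required — the nonnegative stochastic exponential is automatically a supermartingale, delivering $\E[\mathcal E_T|\F_t]\le1$ in precisely the direction needed, while $|q_\cdot|\le\gamma$ alone converts the random $\langle M\rangle_T$ into the deterministic factor $e^{\frac12\theta^2\gamma^2(T-t)}$.
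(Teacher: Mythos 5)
Your proof is correct. The paper itself contains no argument for this lemma — it is simply quoted from \cite{HuTang2018ECP} — and your route (the Gaussian linearization $e^{\lambda M^2}=\E_N[e^{\sqrt{2\lambda}MN}]$, the supermartingale property of the Dol\'eans exponential giving the sub-Gaussian bound $\E[e^{\theta M}|\F_t]\le e^{\theta^2\gamma^2(T-t)/2}$, and then one Gaussian integral) is the standard proof of exactly this estimate, essentially that of the cited reference; the only point requiring care, which you correctly flag, is the $x$-dependent null set when the conditional bound is integrated over the auxiliary Gaussian variable, handled either by a jointly measurable version plus Fubini--Tonelli or, more cleanly, by enlarging the filtration so that $N$ is $\F_t$-measurable and $B$ remains a Brownian motion.
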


Now, we give the proof of the uniqueness part of \cref{thm:MainResult}.

\begin{proof}[The proof of the uniqueness part of \cref{thm:MainResult}]
Let $g$ satisfy assumption \ref{H2}, and for $i=1,2$, let $(Y^i_t,Z^i_t)_{t\in\T}$ be a solution of BSDE$(\xi,g)$ such that $\left(\psi\left(|Y^i_t|,\gamma\sqrt{t}\right)\right)_{t\in\T}$ belongs to class (D). Define $\delta Y_\cdot:=Y^1_\cdot-Y^2_\cdot$ and $\delta Z_\cdot:=Z^1_\cdot-Z^2_\cdot$. Then the pair $(\delta Y_\cdot,\delta Z_\cdot)$ verifies the following BSDE:
$$
  \delta Y_t=\int_t^T \left(u_s\delta Y_s+v_s\cdot\delta Z_s\right) {\rm d}s-\int_t^T \delta Z_s \cdot {\rm d}B_s, \ \ t\in\T,
$$
where $g(s,Y^1_s,Z^1_s)-g(s,Y^2_s,Z^2_s)=u_s\delta Y_s+v_s\cdot\delta Z_s$ with a pair of $(\F_t)$-progressively measurable process $(u_\cdot,v_\cdot)$ such that $|u_s|\leq \beta$ and $|v_s|\leq \gamma$ by a standard linearization procedure. For each $t\in (0,T]$ and each positive integer $n\geq 1$, define the following stopping times:\vspace{-0.1cm}
$$
\sigma_n:=\inf\left\{s\in [t,T]:\ |\delta Y_s|+\int_t^s |\delta Z_r|^2 {\rm d}r\geq n\right\}\wedge T,
$$
with the convention that $\inf \Phi=+\infty$. Then,\vspace{0.2cm}
$$
\delta Y_t=\E\left[\left.e^{\int_t^{\sigma_n}
u_s{\rm d}s+\int_t^{\sigma_n}v_s\cdot {\rm d}B_s-{1\over 2}\int_t^{\sigma_n}|v_s|^2{\rm d}s }\delta Y_{\sigma_n}  \right|\F_t\right].
$$
Therefore,
\begin{equation}\label{eq:15}
\left|\delta Y_t\right|\leq e^{\beta T}\E\left[\left.e^{\int_t^{\sigma_n}v_s\cdot {\rm d}B_s}\left|\delta Y_{\sigma_n}\right|\right|\F_t\right].\vspace{0.2cm}
\end{equation}
Furthermore, by virtue of \cref{lem:2.8} we know that for each $n\geq 1$,
\begin{equation}\label{eq:16}
e^{\int_t^{\sigma_n}v_s\cdot {\rm d}B_s}\left|\delta Y_{\sigma_n}\right|\leq e^{{1\over 2\gamma^2t}\left(\int_t^{\sigma_n}v_s\cdot {\rm d}B_s\right)^2}+e^{2\gamma^2t}\psi(\left|\delta Y_{\sigma_n}\right|,\gamma\sqrt{t}), \ \ t\in (0,T].
\end{equation}
And, it follows from \cref{lem:2.9} that for all $n\geq 1$,
$$
\E\left[\left| e^{{1\over 2\gamma^2t}\left(\int_t^{\sigma_n}v_s\cdot {\rm d}B_s\right)^2}  \right|^2\right]=\E\left[e^{{1\over \gamma^2t}\left(\int_t^{\sigma_n}v_s\cdot {\rm d}B_s\right)^2}  \right]\leq {1\over \sqrt{1-{2(T-t)\over t}}}\leq \sqrt{3},\ \ t\in [3T/4,T],
$$
and, thus, the family of random variables $e^{{1\over 2\gamma^2t}\left(\int_t^{\sigma_n}v_s\cdot {\rm d}B_s\right)^2}$ is uniformly integrable on the time interval $[3T/4,T]$. On the other hand, in view of \cref{Lem:2.2}, observe that for all $n\geq 1$,
$$
\begin{array}{lll}
\Dis e^{2\gamma^2t}\psi(\left|\delta Y_{\sigma_n}\right|,\gamma\sqrt{t})&\leq & \Dis  e^{2\gamma^2T}\psi(\left|Y^1_{\sigma_n}\right|+\left|Y^2_{\sigma_n}\right|,
\gamma\sqrt{\sigma_n})\vspace{0.2cm}\\
&\leq & \Dis {e^{2\gamma^2T}\psi(2,\gamma\sqrt{T}) \over 2}\left[\psi(\left| Y^1_{\sigma_n}\right|,\gamma\sqrt{\sigma_n})+\psi(\left| Y^2_{\sigma_n}\right|,\gamma\sqrt{\sigma_n})\right],\ \ t\in\T.
\end{array}
$$
Thus, from \eqref{eq:16} we can conclude that, for $t\in [3T/4,T]$, the family of random variables $e^{\int_t^{\sigma_n}v_s\cdot {\rm d}B_s}\left|\delta Y_{\sigma_n}\right|$ is uniformly integrable. Consequently, by letting $n\To\infty$ in the inequality \eqref{eq:15} we have $\delta Y_\cdot=0$ on the interval $[3T/4,T]$. It is clear that $\delta Z_\cdot=0$ on the interval $[3T/4,T]$. The uniqueness of the solution on the interval $[3T/4,T]$ is obtained. In a same way, we successively have the uniqueness on the intervals $[3^2T/4^2, 3T/4]$, $[3^3T/4^3, 3^2T/4^2]$, $\cdots$, $[3^pT/4^p, 3^{p-1}T/4^{p-1}]$, $\cdots$. Finally, in view of the continuity of process $\delta Y_t$ with respect to the time variable $t$, we obtain the uniqueness on the whole interval $\T$ by sending $p$ to infinity. The proof is then complete.\vspace{0.1cm}
\end{proof}

\begin{rmk}\label{rmk:2.10}
By a similar analysis to Remark 2.6 in \cite{BuckdahnHuTang2018ECP}, we know that the uniformly Lipschitz assumption \ref{H2} in \cref{thm:MainResult} can be relaxed to the following monotone assumption:
$$
{\rm sgn}(y_1-y_2)\left(g(\omega,t,y^1,z)-g(\omega,t,y^2,z)\right)\leq \beta |y^1-y^2|
$$
and
$$
|g(\omega,t,y,z^1)-g(\omega,t,y,z^2)|\leq \gamma |z^1-z^2|.
$$
\end{rmk}

\vspace{0.1cm}








\end{document}